\newcommand{\R}{\mathbb R}
\newcommand{\Z}{\mathbb Z}
\newtcolorbox[blend into=figures]{myfigure}[2][]{float=htb,
title={#2},#1}
\theoremstyle{plain}
\newtheorem{theorem}{Theorem}[section]
\newtheorem{lemma}{Lemma}[section]
\newtheorem{remark}{Remark}
\newtheorem{proposition}{Proposition}[section]
\theoremstyle{definition}
\newtheorem{definition}{Definition}[section]
\theoremstyle{definition}
\newtheorem{example}{Example}
\def\S{\mathcal S}
\def\C{\mathcal C}
\def\Re{\mathcal R}
\def\deg{\text{deg}}
\def\var{\text{Var}}
\def\P{\mathbb{P}}
\def\E{\mathbb{E}}
\def\P{\mathbb{P}}
\begin{document}

\title{Prevalence of deficiency zero reaction networks in an Erd\H os-R\'enyi  framework}
\author{David F.~Anderson\thanks{Department of Mathematics, University of
  Wisconsin, Madison, USA.  anderson@math.wisc.edu},
  \and
  Tung D.~Nguyen\thanks{Department of Mathematics, University of
  Wisconsin, Madison, USA.  nguyen34@math.wisc.edu.
}
}
\maketitle

\begin{abstract}
Reaction networks are commonly used within the mathematical biology and mathematical chemistry communities to model the dynamics of interacting species.  These models differ from the typical graphs found in random graph theory since their vertices are constructed from elementary building blocks, i.e., the species. 
In this paper, we consider these networks in an Erd\H os-R\'enyi framework and, under suitable assumptions, derive a threshold function for the network to have a deficiency of zero, which is a property of great interest in the  reaction network community.  Specifically,   if the number of species is denoted by $n$ and if the edge probability is denote by $p_n$, then we prove that the probability of a random binary network being deficiency zero converges to  1 if $p_n\ll r(n)$, as $n \to \infty$, and converges to 0 if  $p_n \gg r(n)$, as $n \to \infty$, where  $r(n)=\frac{1}{n^3}$.

\end{abstract}

\section{Introduction}
Reaction network models are often used to study the dynamics of the abundances of species from various branches of chemistry and biology.  Here the word ``species'' can refer to different (bio)chemical molecules or different animal species, depending on the context. These networks take the form of directed graphs in which the vertices, often termed \textit{complexes} in the domains of interest, are linear combinations of the species over the non-negative integers and the directed edges, which imply a state transition for the associated dynamical system, are termed \textit{reactions}.    See Figure \ref{figure1} for an example of a reaction network. 

To each such graph a quantity termed the \textit{deficiency} can be computed, and this quantity has been central to many classical results pertaining to the associated dynamical systems \cite{AC:non-mass,ACK:product,F1,H,H-J1,AN:non-mass,CW:CB}. 
To compute the deficiency, we first note that the vertices of a reaction network, which will be denoted by $y$ and/or $y'$ throughout this paper, can be viewed as vectors in $\Z^n_{\ge0}$.  For example, the vertices in Figure \ref{figure1}  can  be associated with the vectors $\left[\begin{matrix}0\\0\end{matrix}\right], \left[\begin{matrix}1\\1\end{matrix}\right], \left[\begin{matrix}0\\1\end{matrix}\right], \left[\begin{matrix}2\\0\end{matrix}\right], \left[\begin{matrix}0\\2\end{matrix}\right].$  Moreover, a directed edge between two such vectors, $y \to y'$, implies a state update of the form $y'-y \in \Z^n$.  The set of  state update vectors implied by the graph is called the set of  ``reaction vectors'' for the model.   Viewing things in this manner the deficiency  for the graph provides a relation between the number of vertices, the number of connected components and the dimension of the space spanned by the reaction vectors. The formal definition of deficiency will be given in Definition \ref{def:deficiency}.


\begin{figure}\label{figure1}
\begin{center}
\begin{tikzpicture}
    \node (0)   at (4,0)  {$\emptyset$};
    \node (S1+S2)   at (6.8,0)  {$S_1+S_2$};
    \node (S2)  at (6.8,-2.5) {$S_2$};

    \path[->]
    (0) edge  (S1+S2)
    (S1+S2) edge  (S2)
    (S2) edge (0);   
    
    \node (2S1) at (9,-1) {$2S_1$};
    \node (2S2) at (11,-1) {$2S_2$};
    
    \path[->]
    ([yshift=-1.4ex]2S1.north east) edge ([yshift=-1.4ex]2S2.north west)
    ([yshift=1.4ex]2S2.south west) edge ([yshift=1.4ex]2S1.south east);

\end{tikzpicture}
\end{center}
\caption{A reaction network with two species: $S_1$ and $S_2$. The vertices are linear combinations of the species over the integers.  The directed edges are termed reactions and determine the net change in the counts of the species due to one instance of the reaction.  For example, the reaction $S_1 + S_2 \to S_2$ reduces the count of $S_1$ by one, but does not affect the count of $S_2$. 
}
\end{figure}

Given the importance of the deficiency zero property, it is natural to ask: how common is this property? There are a number of ways one can tackle this question, including a simple enumeration of all networks of a given size.  In fact, the earliest attempt to answer this question can be traced back to  work by Horn in 1973 \cite{H2}. In that paper, Horn considered all reaction networks with exactly three vertices, each of which satisfies $\sum_{i = 1}^n y_i \le 2$ where $y_i$ denotes the $i$-th component of the vector associated with a vertex $y$, but no condition on the number of species. Horn found 43 isomorphism classes of such networks, and among these, 41  have deficiency zero.

We choose a different approach by considering networks with a fixed number of species, say $n$, and then quantifying the prevalence of the  deficiency zero property via limit theorems (as $n\to \infty$) in an  Erd\H os-R\'enyi random graph framework in which there is an equal  probability, $p_n$, that there is an edge between any two vertices.
However, we are immediately confronted with a modeling problem: for any finite number of species there are an infinite number of possible graphs that can be constructed from them.      For example, with just the single species $S_1,$ possible vertices include $S_1, 2S_1, 3S_1, \dots.$  Hence, we must restrict ourselves in some manner so that for a given number of species, only a finite number of vertices are possible.  

In this paper, we restrict ourselves to study  so-called ``binary'' reaction networks, whose vertices satisfy $\sum_{i = 1}^n y_i \le 2$.   Such models are quite common in the  literature.
Our main finding is that in such a scenario $r(n)=\frac{1}{n^3}$ is a threshold function in that if $p_n \ll r(n)$, as $n\to \infty$, then the probability of deficiency zero converges to 1, whereas if $p_n \gg r(n)$, then the probability of deficiency zero converges to 0.  See Theorem \ref{cor:main1} and Theorem \ref{thm:secondmain}.
Moreover, along the way we prove that in the setting of $p_n\ll r(n)$, with high probability all the connected components of deficiency zero reaction networks will consist of pairs of vertices.  
Intriguingly, paired reaction networks can be found in certain models of autocatalytic cycles related to the study of the origin of life \cite{ecology}.

The remainder of this paper is organized as follows.  In Section \ref{sec2}, we briefly review some key terminology of reaction network theory, and provide some preliminary results related to deficiency. In Section \ref{sec3}, we set up the Erd\H os-R\'enyi random graph framework for reaction networks. In Section \ref{sec4}, we present our main results, which quantify the prevalence of deficiency zero reaction networks in our chosen framework. Finally, in Section \ref{sec5}, we end with a brief discussion.


\section{Chemical reaction networks}\label{sec2}

Here we formally introduce reaction networks and deficiency.  Moreover, we collect some preliminary results related to the deficiency of a reaction network.

\subsection{Reaction networks and key definitions}
Let $\{S_1,\dots,S_n\}$ be a set of $n$ \textit{species} undergoing a finite number of reaction types.  We denote a particular reaction by $y \to y'$, where $y$ and $y'$ are linear combinations of the species on $\{0,1,2,\dots\}$ representing the number of molecules of each species consumed and created in one instance of that reaction, respectively. The linear combinations $y$ and $y'$ are often called \textit{complexes} of the system. For a given reaction, $y\to y'$, the complex $y$ is called  the \textit{source complex} and $y'$ is called the \textit{product complex}.  A complex can be both a source complex and a product complex.   However, a complex can not be both the source and product for a single reaction nor do we include isolated complexes that are not involved in any reaction.  We may associate each complex with a vector in $\mathbb{Z}^n_{\geq 0}$, whose coordinates give the number of molecules of the corresponding species in the complex. As is common in the reaction network literature, both ways of representing complexes will be used interchangeably throughout the paper. For example,  if the system has 2 species $\{S_1,S_2\}$, the reaction $S_1+S_2\to 2S_2$ has $y=S_1+S_2$, which is associated with the vector $\left[\begin{matrix}1\\1\end{matrix}\right]$, and $y'=2S_2$, which is associated with the vector $\left[\begin{matrix}0\\2\end{matrix}\right]$.  Viewing the complexes as vectors, the \textit{reaction vector} associated to the reaction $y\to y'$ is simply $y'-y\in \Z^n$, which gives the state update of the system due to one occurrence of the reaction.

\begin{definition}\label{def:RN}
For $n \ge 0$, let $\mathcal{S} =\{S_1,...,S_n\}$, $\mathcal{C}=\cup_{y\to y'}\{y,y'\}$, and $\mathcal{R}=\cup_{y\to y'}\{y\to y'\}$ be the sets of species,  complexes, and reactions respectively. The triple $\{\mathcal{S,C,R}\}$ is called a \textit{reaction network}.  When $n = 0$, in which case $\S = \C = \Re = \emptyset$, the network is termed the \textit{empty network}. \hfill $\triangle$
\end{definition}

To each reaction network $\{\mathcal{S,C,R}\}$, there is a unique directed graph constructed in the obvious manner: the vertices of the graph are given by $\C$ and a directed edge is placed from $y$ to $y'$ if and only if $y \to y' \in \mathcal{R}$.  Figure \ref{figure1} is an example of such a graph.  
  Note that by definition the directed graph associated to a reaction network contains only vertices corresponding to elements in $\C$ involved in some reaction, i.e., the degree of all vertices is at least 1 and so isolated vertices are not present in the associated network. We denote by $\ell$ the number of  connected components of the graph.

\begin{remark}\label{remark:LCbound}
Note that since each connected component must consist of at least two vertices, we have the bound $\ell \le \frac{|\mathcal{C}|}{2}$. 
\end{remark}

 \begin{definition}
 The linear subspace generated by all reaction vectors is called the \textit{stoichiometric subspace} of the network. 
 Denote $s=\dim(span\{y'-y:y\to y'\in\Re\})$ the dimension of the stoichiometric subspace.\hfill $\triangle$
 \end{definition}
Note that $s \le n$, where $n$ is the number of species.  This fact will be used a number of times in this paper.
 
\begin{definition} 
A vertex, $y \in \Z^n_{\ge 0}$, is called \textit{binary} if $\sum_{i = 1}^n y_i = 2$. A vertex is called \textit{unary} if $\sum_{i = 1}^n y_i = 1$. The vertex  $\vec{0} \in \Z^n$ is said to be of \textit{zeroth order}. \hfill $\triangle$
\end{definition}

\begin{definition}
A reaction network $\{\mathcal{S,C,R}\}$ is called \textit{binary} if each vertex is binary, unary, or of zeroth order.\hfill $\triangle$
\end{definition}

As discussed in the introduction, we will focus on \textit{binary} reaction networks in this paper.  

The following type of network will play a key role in the current paper.

\begin{definition}\label{def:paired}
A reaction network is called \textit{paired} if each of its connected components  contains precisely two vertices. A reaction network is called \textit{$i$-paired} if it is paired and contains $i$ connected components. \hfill $\triangle$
\end{definition}

\subsection{Deficiency of a reaction network}

\begin{definition}\label{def:deficiency}
The \textit{deficiency} of a reaction network $\{\mathcal{S,C,R}\}$ is $\delta=|\mathcal{C}|-\ell-s$, where $|\mathcal{C}|$ is the number of vertices, $\ell$ is the number of connected components of the associated graph, and $s = \dim(\text{span}\{y'-y: y\to y'\in \Re\})$ is the dimension of the stoichiometric subspace of the network.

For each $j\le \ell$, we let $\C_j$ denote the collection of vertices in the $j$th connected component, $s_j$ be the corresponding dimension of the span of the reaction vectors of that component, and  define $\delta_j = |\C_j| - 1 - s_j$ to be the deficiency of that component.\hfill $\triangle$
\end{definition}

We collect a number of basic properties of deficiency in the following lemma.

\begin{lemma}\label{lemma:defproperties}
Let $n \ge 1$ and let $\{\S,\C,\Re\}$ be a reaction network with $n$ species.  
\begin{enumerate}[(a)]
    \item  $\delta$ does not depend upon the direction of the edges.
    \item  $s_j \le |\C_j| - 1,$ and so $\delta_j \ge 0.$
    \item $s \le |\C| - \ell,$ and so $\delta \ge 0.$
    \item $\delta = 0$ if and only if both the following conditions hold:
    \begin{enumerate}[(i)]
        \item $s_j = |\C_j| - 1$ for each $j\le \ell$ (equivalently, $\delta_j= 0$ for each $j\le \ell$).
        \item $\sum_{j=1}^\ell s_j = s.$
    \end{enumerate}  
    \item
    If $\delta = 0$, then
    \[
        |\C| \le 2n.
    \]
    \item 
    
    Suppose the reaction network is paired, and that $\zeta_j$ is a reaction vector from the $j$th connected component.  Then $\delta=0$ if and only if $\cup_{j=1}^\ell\{\zeta_j\} = \{\zeta_1,\dots,\zeta_\ell\}$ are linearly independent.
    
    \item (Monotonicity of deficiency.) Let $\{\widehat{\S},\widehat{\C},\widehat{\Re}\}$ and $\{\S,\C,\Re\}$  be two reaction networks with $\widehat{\Re}\setminus \Re = \{y\to y'\}$, a single reaction. Let $\hat \delta$ and $\delta$ be the deficiencies of the two networks. Then
\[
\hat\delta \geq \delta.
\]
\end{enumerate}
\end{lemma}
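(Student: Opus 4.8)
The plan is to establish parts (a)--(f) by direct structural arguments about the three ingredients $|\C|$, $\ell$, and $s$ of the deficiency, and then to treat the monotonicity statement (g)---the main obstacle---by a case analysis on how a single added reaction perturbs each ingredient.

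For (a), $|\C|$ and $\ell$ are manifestly orientation-independent, while reversing $y \to y'$ replaces the reaction vector $y'-y$ by its negative and hence leaves the span, and so $s$, unchanged. For (b), the key observation is that within a connected component $\C_j = \{v_1,\dots,v_m\}$---treated as undirected thanks to (a)---every $v_k - v_1$ is a signed sum of reaction vectors along a path from $v_1$ to $v_k$, while conversely every reaction vector equals $(y'-v_1)-(y-v_1)$; thus the component's stoichiometric subspace is spanned by the $m-1$ vectors $\{v_k - v_1\}_{k=2}^m$, giving $s_j \le |\C_j| - 1$. Part (c) then follows since the full stoichiometric subspace is the sum of the per-component subspaces, whence $s \le \sum_j s_j \le \sum_j(|\C_j|-1) = |\C| - \ell$.

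For (d) I would record the identity
\[
\delta = \sum_{j=1}^\ell \delta_j + \Big(\sum_{j=1}^\ell s_j - s\Big),
\]
whose two grouped terms are nonnegative by (b) and (c); hence $\delta = 0$ forces each to vanish, which is precisely conditions (i) and (ii). Part (e) combines (d) with $s \le n$ and the bound $\ell \le |\C|/2$ of Remark \ref{remark:LCbound}: when $\delta = 0$ one has $|\C| - \ell = s \le n$, so $|\C| \le n + \ell \le n + |\C|/2$, giving $|\C| \le 2n$. For (f), each paired component has $|\C_j| = 2$ and a one-dimensional stoichiometric subspace spanned by $\zeta_j$, so $s_j = 1 = |\C_j|-1$ makes condition (i) automatic; then (d) reduces $\delta = 0$ to $\sum_j s_j = \ell = s = \dim(\mathrm{span}\{\zeta_1,\dots,\zeta_\ell\})$, i.e.\ to linear independence of the $\zeta_j$.

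For the monotonicity (g), which I expect to be the crux, I would write $\hat\delta - \delta = a - b - c$ with $a = |\widehat{\C}| - |\C| \in \{0,1,2\}$ (newly introduced complexes), $b = \hat\ell - \ell$ (change in component count), and $c = \hat s - s \in \{0,1\}$ (a single reaction vector raises the dimension by at most one), and aim to show $a \ge b + c$. Splitting on $a$: if $a = 2$ the added edge spawns a fresh component, so $b = 1$ and $a - b - c = 1 - c \ge 0$; if $a = 1$ the new vertex merely attaches to an existing component, so $b = 0$ and again $a - b - c = 1 - c \ge 0$; if $a = 0$ I split further on whether $y$ and $y'$ lie in the same original component ($b = 0$) or in different ones ($b = -1$, a merge, giving $a - b - c = 1 - c \ge 0$). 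The one delicate case---and the real content of the lemma---is $a = 0$ with $y,y'$ in the same component, where I must show $c = 0$; this is exactly where the path description from (b) pays off, since $y' - y = (y'-v_1)-(y-v_1)$ already lies in that component's stoichiometric subspace, so the new reaction vector is redundant and $s$ cannot grow. Assembling the four cases yields $\hat\delta \ge \delta$.
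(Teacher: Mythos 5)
Your proposal is correct and follows essentially the same route as the paper: the same ingredients ($s\le n$, $\ell\le|\C|/2$, per-component bounds) for parts (a)--(f), and for (g) the identical four-case analysis on whether $y$ and $y'$ are existing or new vertices and whether they share a component, with the same crucial observation that when $y,y'$ lie in one component the vector $y'-y$ is already in the stoichiometric subspace so $s$ cannot grow. Your telescoping-path argument in (b) is merely a more explicit version of the paper's one-line remark that cycles force dependencies; the underlying idea is the same.
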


\begin{proof}
\begin{enumerate}[(a)]
\item This follows from the definition of deficiency.
\item This follows from the observation that a cycle within a connected component implies a dependency among the reaction vectors.
\item This follows from (b) since $\C = \cup_{j=1}^\ell \C_j$ and $s \le \sum_{j=1}^\ell s_j.$
\item This follows in a straightforward manner from $(b)$ and $(c).$
\item From the definition of deficiency $\delta = |\mathcal{C}|-\ell-s$, the fact that $s\leq n$, and $\ell\leq \frac{|\mathcal{C}|}{2}$ (from Remark \ref{remark:LCbound}), we have   
\[
\delta \geq |\mathcal{C}|- \frac{|\mathcal{C}|}{2} -n =\frac{|\mathcal{C}|}{2}-n.
\]
Since the reaction network has deficiency zero, we therefore have 
\begin{equation}\label{eq12314123}
0\geq \frac{|\mathcal{C}|}{2} -n,
\end{equation}
which implies $|\mathcal{C}|\le 2n.$ 
\item Since the reaction network is paired, we have $s_j=1$ and $|\C_j|=2$ for each $j\leq \ell$. Thus condition $(i)$ in (d) is  satisfied. Since $s_j=1$, condition $(ii)$ in (d) holds if and only if all $\zeta_j$ are linearly independent.

\item Let $\ell,s$ and $\hat{\ell},\hat{s}$ be the number of connected components and dimension of the stoichiometric subspace of $\{\S,\C,\Re\}$ and $\{\widehat{\S},\widehat{\C},\widehat{\Re}\}$, respectively. 
\begin{itemize}
    \item Case 1: $y,y'\in\C$ and $y$ and $y'$ are from the same connected component. In this case, we have $|\widehat{\C}|=|\C|$ and $\hat{\ell}=\ell$. Since $y$ and $y'$ are from the same connected component, the reaction vector $y'-y$ can be written as the linear combination of the remaining reaction vectors from its connected component. Therefore adding $y\to y'$ to $\{\S,\C,\Re\}$ does not increase the dimension of its stoichiometric subspace. Thus $\hat{s}=s$ and $\hat\delta=\delta$.
    \item Case 2: $y,y'\in\C$ and $y$ and $y'$ are from different connected components. In this case, we have $|\widehat{\C}|=|\C|$ and $\hat{\ell}=\ell-1$. Since we are adding one reaction to $\{\S,\C,\Re\}$ to obtain $\{\widehat{\S},\widehat{\C},\widehat{\Re}\}$, we add at most 1 dimension to the stoichiometric subspace of $\{\S,\C,\Re\}$. Thus $\hat{s}\leq s+1$ and 
    \[
    \hat\delta=|\widehat{\C}|-\hat{\ell}-\hat{s} \geq |\C| - (\ell-1) - (s+1) = \delta.
    \]
    \item Case 3: $y \in \C$ and $y' \notin \C$ or vice versa. In this case, we have $|\widehat{\C}|=|\C|+1$, and $\hat{\ell}=\ell$. Similar to the previous case, we must have $\hat{s}\leq s+1$, and thus
    \[
    \hat\delta=|\widehat{\C}|-\hat{\ell}-\hat{s} \geq |\C|+1 - \ell - (s+1) = \delta.
    \]
    \item Case 4: $y,y'\notin \C$. In this case, we have $|\widehat{\C}|=|\C|+2$, and $\hat{\ell}=\ell+1$. Similar to the previous cases, we still have $\hat{s}\leq s+1$ and thus
     \[
    \hat\delta=|\widehat{\C}|-\hat{\ell}-\hat{s} \geq |\C|+2- (\ell+1) - (s+1) = \delta.\qedhere
    \]
\end{itemize}
\qedhere
\end{enumerate}
\end{proof}

\begin{remark}
Lemma \ref{lemma:defproperties}(g) implies that if we remove a reaction from a reaction network with deficiency zero, then the resulting network also has deficiency zero. This means deficiency zero is a monotone decreasing property, which guarantees that a threshold function for deficiency zero exists (see \cite{threshold}).  
\end{remark}


We will illustrate the concept of deficiency via two examples.  

\begin{example}[Enzyme kinetics \cite{ACK:product}]	
Consider a reaction network with species $\{S,E,SE,P\}$ and associated graph
\begin{align*}
S+E &\leftrightarrows  SE \leftrightarrows P+E\\
&E \leftrightarrows \emptyset \leftrightarrows S.
\end{align*}
In this example, the reaction network has $|\mathcal{C}|=6$ vertices, there are $\ell=2$ connected components, and the dimension of the stochiometric subspace is $s=4$. Thus the deficiency is
\[
\delta = 6-2-4=0. 
\]
\hfill $\square$
\end{example}

The following example demonstrates that it is sometimes most natural to use Lemma \ref{lemma:defproperties}(f) to verify that a network has a deficiency of zero. 
\begin{example}[Binary, 3-paired network]	
Consider a reaction network with species $\{S_1, S_2, \dots, S_9\}$ and associated graph
\begin{align*}
&S_1 + S_2 \rightleftarrows S_3 + S_4\\
&S_1 + S_3 \rightleftarrows S_5 + S_6\\
&S_6+S_7 \rightleftarrows S_8+S_9.
\end{align*}
This network is \textit{paired} in the sense of Definition \ref{def:paired}.  Moreover, there is linear independence among the connected components, which can  be seen easily since each connected component has a species not found in any other connected component.  Hence,   Lemma \ref{lemma:defproperties}(f) implies that the deficiency of this network is zero.
\hfill $\square$
\end{example}

\section{Erd\H os-R\'enyi model for binary reaction networks}\label{sec3}

As alluded to in the introduction, the vast majority of reaction network models found in the literature are binary.  Hence, those are the focus of the current paper.


Let the set of species be $\mathcal{S}=\{S_1,S_2,\dots,S_n\}$. We consider binary reaction networks with species in $\mathcal{S}$. The set of all possible vertices is then
\[
\C^0_n = \{\emptyset, S_i, S_i+S_j:  \text{for  $1 \le i \le n$ and $1 \le j \le n$}.\}
\] 
For a given $n$, we denote  $N_n = |\C^0_n|$, the cardinality of $\C^0_n$.  Thus, $N_n$ is  the total number of possible zeroth order, unary and binary vertices that can be generated from $n$ distinct species.  A straightforward calculation gives
\[
	N_n=1+n+n+\frac{n(n-1)}{2}= \frac{n^2+3n+2}{2},
\]
and so
\[
n \sim \sqrt{2N_n}.
\]
Here we use the notation $\sim$ in the standard way: for any two sequences of real numbers $\{a_n\}$ and $\{b_n\}$, we write $a_n\sim b_n$ if $\lim_{n\to \infty} \frac{a_n}{b_n} = c$ for some constant $c \in \R$.

We consider an Erd\H os-R\'enyi random graph $G(N_n,p_n)$, which we will simply denote $G_n$ throughout, where the set of vertices is the set 
$\C^0_n$, and the probability that there is an edge between any 2 particular vertices is $p_n$, independently of all other  edges. To each such random graph a reaction network  graph  can  be  associated  in  the  following  way
\begin{enumerate}
\item each vertex with positive degree in the random graph represents a vertex in the reaction network, and
\item each edge in the random graph represents a reaction in the reaction network  (we can assume all reactions are reversible, i.e., that $y\to y'\in \Re \implies y'\to y \in \Re$, since we do not need to worry about direction--see Lemma \ref{lemma:defproperties}(a)).
\end{enumerate}
We will denote the reaction network associated with the graph $G(N_n,p_n)$ by $R_n$. We will denote the deficiency of $R_n$ by $\delta_{R_n}.$

In order to solidify the notation, we present below the cases $n=1$ and $n=2$.  

\begin{example}[The case with $n=1$ species]
Denote the only species by $A$. The set of vertices, or equivalently the set of all possible  complexes, is $\C^0_1=\{\emptyset ,A,2A\}$. Figure \ref{figure2} shows one possible realization of the random  graph $G(N_1,p)$ when $p \in (0,1).$
The associated reaction network $R_1$ for the particular  graph shown in Figure \ref{figure2} is $\emptyset \leftrightarrows A \leftrightarrows 2A.$
\begin{figure}
\centering
\begin{tikzpicture}
\filldraw [black] (0,2) circle (2pt) node[anchor=south]{\large $\emptyset$};
\filldraw [black] (1.7,-1) circle (2pt) node[anchor=north]{\large $A$};
\filldraw [black] (-1.7,-1) circle (2pt) node[anchor=north]{\large $2A$};
\draw[-, line width=0.7mm] (0,2) -- (1.7,-1);
\draw[-, line width=0.7mm] (1.7,-1) -- (-1.7,-1);
\end{tikzpicture}
\caption{A realization of a random graph when $n=1$ and $p\in (0,1).$  The associated reaction network is $\emptyset \leftrightarrows A \leftrightarrows 2A.$}
\label{figure2}
\end{figure}
\hfill $\square$
\end{example}

\begin{example}[The case with $n=2$ species]
Denote the set of species by $\mathcal{S}=\{A,B\}$. The set of vertices is $\C^0_2=\{\emptyset,A,B,2A,2B,A+B\}$. Figure \ref{figure3} illustrates a possible  realization of the random graph $G(N_2,p)$ when $p \in (0,1)$.  The associated reaction network $R_2$ for the particular  graph shown in Figure \ref{figure2} is 
\begin{align*}
    \emptyset &\leftrightarrows 2B\\
    B &\leftrightarrows A+B.
\end{align*}
    
\begin{figure}
\centering
\begin{tikzpicture}
\filldraw [black] (0,2) circle (2pt) node[anchor=south]{\large $\emptyset$};
\filldraw [black] (0,-2) circle (2pt) node[anchor=north]{\large $A+B$};
\filldraw [black] (1.7,1) circle (2pt) node[anchor=west]{\large $B$};
\filldraw [black] (1.7,-1) circle (2pt) node[anchor=west]{\large $2B$};
\filldraw [black] (-1.7,1) circle (2pt) node[anchor=east]{\large $A$};
\filldraw [black] (-1.7,-1) circle (2pt) node[anchor=east]{\large $2A$};
\draw[-, line width=0.5mm] (0,2) -- (1.7,-1);
\draw[-, line width=0.5mm] (0,-2) -- (1.7,1);
\end{tikzpicture}
\caption{A realization of a random graph when $n=2$ and $p\in (0,1).$}
\label{figure3}
\end{figure}
\hfill $\square$
\end{example}

\section{The threshold function for deficiency zero}\label{sec4}

The  goal of this section is to prove that $r(n) = \frac1{n^3}$ is a threshold function in that
\begin{align}\label{eq:987907}
\lim_{n\to\infty}\mathbb{P}(\delta_{R_n} = 0) = \begin{cases} 0, &\text{if } p_n \gg r(n)\\
1 & \text{if } p_n \ll r(n). \end{cases} 
\end{align}


\noindent Throughout this section, we will make use of the standard notation of $a_n \ll b_n$ or $b_n \gg a_n$ to mean $\lim_{n\to \infty} \frac{a_n}{b_n} = 0$, whenever $\{a_n\}$ and $\{b_n\}$ are sequences of non-negative real numbers.  We also remind the reader that we write $a_n \sim b_n$ to mean $\lim_{n\to \infty} \frac{a_n}{b_n} = c$ for some constant $c \in \R_{>0}$.

\subsection{The case $p_n\gg r(n)$} \label{sec:upper}


This case is relatively straightforward.  We will show that if $p_n \gg r(n)=\frac{1}{n^3}$, then with high probability we have $|\C|>2n$.  In this case,  Lemma \ref{lemma:defproperties}(e) implies the associated reaction network does not have deficiency zero.

Since $|\C|$ is the number of non-isolated vertices in $G_n$, we start with a lemma regarding the number of isolated vertices in $G_n$. 
\begin{lemma}\label{upper}
Suppose $p_n = \frac{2n+\alpha_n}{N_n(N_n-1)}$ with $\alpha_n \gg n^{1/2}$. Let $I$ be the set of isolated vertices in $G_n$, that is $I =\{ v\in \C^0_n : deg(v)=0\}$. Then we have 
\[
\lim_{n\to\infty}\mathbb{P}(|I|\geq N_n-2n)=0. 
\]
\end{lemma}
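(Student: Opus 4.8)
The plan is to reformulate the event in terms of the number of non-isolated vertices and then run a second moment argument. Since $|\C| = N_n - |I|$ is exactly the number of non-isolated vertices of $G_n$, the event $\{|I| \ge N_n - 2n\}$ coincides with $\{|\C| \le 2n\}$, so it suffices to show that $G_n$ typically has strictly more than $2n$ non-isolated vertices. Writing $|I| = \sum_{v \in \C^0_n} X_v$ with $X_v = \mathbf 1_{\{\deg(v) = 0\}}$, I would estimate $\E[|I|]$ and $\var(|I|)$ and then conclude with Chebyshev's inequality applied to the upper tail of $|I|$.

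For the mean, each vertex has $N_n - 1$ potential incident edges, so $\E[X_v] = (1-p_n)^{N_n - 1} =: q$ and $\E[|I|] = N_n q$. Using the elementary two-sided bound $(N_n-1)p_n - \binom{N_n-1}{2}p_n^2 \le 1 - q \le (N_n-1)p_n$ together with the identity $N_n(N_n-1)p_n = 2n + \alpha_n$, I obtain
\[
N_n - 2n - \E[|I|] = N_n(1-q) - 2n \ \ge\ \alpha_n - \frac{(2n+\alpha_n)^2}{2N_n}.
\]
When $\alpha_n = O(n)$ the subtracted term is $O(1)$ because $N_n \sim n^2/2$, while for $n \ll \alpha_n \ll n^2$ it is $o(\alpha_n)$; in either case the gap $t_n := N_n - 2n - \E[|I|]$ is positive and of order $\alpha_n$. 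For the variance, the isolation indicators are positively correlated: a short computation gives $\var(X_v) = q(1-q)$ and, for $u \ne v$, $\mathrm{Cov}(X_u,X_v) = p_n(1-p_n)^{2N_n-3} = p_n q^2/(1-p_n)$, since forcing both $u$ and $v$ to be isolated forbids exactly $2N_n - 3$ distinct edges. Summing and again using $N_n(N_n-1)p_n = 2n+\alpha_n$ yields
\[
\var(|I|) = N_n q(1-q) + \frac{(2n+\alpha_n)\,q^2}{1-p_n} = O(2n + \alpha_n),
\]
where in the sparse regime $q \to 1$ and $1 - q \sim (2n+\alpha_n)/N_n$.

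Chebyshev's inequality then gives
\[
\P(|I| \ge N_n - 2n) = \P\big(|I| - \E[|I|] \ge t_n\big) \le \frac{\var(|I|)}{t_n^2} = O\!\left(\frac{2n + \alpha_n}{\alpha_n^2}\right),
\]
and this tends to $0$ precisely because $\alpha_n \gg n^{1/2}$: when $\alpha_n = O(n)$ the bound is $O(n/\alpha_n^2) \to 0$, and when $\alpha_n \gg n$ it is $O(1/\alpha_n) \to 0$. For the remaining (and easier) range $\alpha_n \gtrsim n^2$ one may instead use $q \le e^{-(2n+\alpha_n)/N_n}$ to see $\E[|I|] \le cN_n$ with $c<1$, making the gap of order $N_n$; alternatively, since $|I|$ is monotone decreasing in the edge set, one can couple down to the edge probability corresponding to $\widetilde\alpha_n = \min(\alpha_n,n)$ and thereby reduce at once to the clean regime $\widetilde\alpha_n \le n$.

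The main obstacle is the variance control, and in particular verifying that the gap $t_n$ genuinely grows like $\alpha_n$ uniformly over the allowed range of $\alpha_n$. This is where the hypothesis $\alpha_n \gg n^{1/2}$ enters in an essentially sharp way, since pitting a gap of order $\alpha_n$ against a standard deviation of order $\sqrt{2n+\alpha_n}$ requires $\alpha_n^2/(2n+\alpha_n) \to \infty$, equivalently $\alpha_n \gg n^{1/2}$ in the critical regime $\alpha_n = O(n)$. The cleanest way to avoid a case analysis on the size of $\alpha_n$ is the monotonicity reduction noted above, which lets me assume $\alpha_n \le n$ and keep every expansion in the regime $p_n(N_n-1) \to 0$, where the estimates for both $\E[|I|]$ and $\var(|I|)$ are transparent.
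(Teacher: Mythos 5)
Your proposal is correct, and its engine is the same as the paper's: isolation indicators, the exact mean $\E[|I|]=N_n(1-p_n)^{N_n-1}$, a variance bound of order $N_n^2p_n=O(2n+\alpha_n)$ (your exact covariance computation and the paper's cruder bound $\var(|I|)\le 2N_n^2p_n$ are the same order), a lower bound of order $\alpha_n$ on the gap $N_n-2n-\E[|I|]$ via the quadratic Bonferroni/Taylor estimate, and Chebyshev, with $\alpha_n\gg n^{1/2}$ entering exactly where you say it does. The genuine difference is how the full range of $\alpha_n$ is covered. The paper runs three cases ($\alpha_n\sim N_n$, $\alpha_n\ll N_n$, $\alpha_n\gg N_n$); in its third case the quantity $\alpha_n-\frac{N_n-2}{2N_n(N_n-1)}(2n+\alpha_n)^2$ placed in the Chebyshev denominator is eventually negative, and a negative lower bound on the gap does not survive squaring, so that case as written needs repair (e.g.\ by reusing the first case's argument). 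Your monotonicity reduction --- $|I|$ is decreasing under adding edges, so the standard coupling $G(N_n,\widetilde p_n)\subseteq G(N_n,p_n)$ with $\widetilde\alpha_n=\min(\alpha_n,n)$ gives $\P_{p_n}(|I|\ge N_n-2n)\le\P_{\widetilde p_n}(|I|\ge N_n-2n)$ --- eliminates the case analysis altogether and keeps every expansion in the sparse regime where it is transparently valid; this is cleaner and more robust than the paper's treatment, whose only advantage is being self-contained (no coupling lemma invoked). One caveat: your non-coupling fallback for $\alpha_n\gtrsim n^2$ (gap of order $N_n$) paired with the variance bound $O(2n+\alpha_n)$ yields only $O\bigl((2n+\alpha_n)/N_n^2\bigr)$, which is not $o(1)$ when $\alpha_n$ is comparable to its maximal size $N_n(N_n-1)$, i.e.\ when $p_n$ is of constant order; in that extreme regime you must either use the coupling (as you in fact recommend) or sharpen the variance bound to $O(N_n)$ via $p_n(1-p_n)^{2N_n-3}\le\frac{1}{e(2N_n-3)}$.
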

\begin{proof}
We require both $\E ( |I|)$ and $\text{Var} (|I|)$.  First, a straightforward calculation yields
\begin{align*}
\E (|I|) &= \E \left[\sum_{v\in \C^0_n} 1_{\{\deg(v)=0\}}\right] =N_n\mathbb{P}(\deg(v)=0)=N_n(1-p_n)^{N_n-1}.
\end{align*}
Turning to the variance, we have
\begin{align*}
\left| I \right|^2 = \sum_{v,w \in \C^0_n}1_{\{\deg(v)=\deg(w)=0\}} = \sum_{v\in \C^0_n}1_{\{\deg(v)=0\}}+ \sum_{v,w \in \C^0_n: v\neq w}1_{\{\deg(v)=\deg(w)=0\}}.
\end{align*}
Therefore, we  have 
\begin{align*}
\var (|I|) &= \E(|I|^2) - \left(\E(|I|)\right)^2 \\
&= \E\left[\sum_{v\in \C^0_n}1_{\{\deg(v)=0\}}+ \sum_{v,w \in \C^0_n: v\neq w}1_{\{\deg(v)=\deg(w)=0\}}\right]-  N_n^2(1-p_n)^{2N_n-2} \\
&=N_n(1-p_n)^{N_n-1} +N_n(N_n-1)(1-p_n)^{2N_n-3} - N_n^2(1-p_n)^{2N_n-2}\\
&=N_n(1-p_n)^{N_n-1}(1-(1-p_n)^{N_n-2})+N_n^2(1-p_n)^{2N_n-3}p_n\\
&\leq N_n(1-p_n)^{N_n-1}(N_n-2)p_n + N_n^2(1-p_n)^{2N_n-3}p_n\\
&\leq N_n(N_n-2)p_n+N_n^2p_n \leq 2N_n^2p_n,
\end{align*}
where the first inequality follows from Bernoulli's inequality.

We will utilize $\E(|I|)$ and $\var(|I|)$ to show that
\begin{equation}\label{eqjk12kjhio}
\lim_{n\to\infty}\P(|I|\geq N_n-2n) = 0.
\end{equation}

It suffices to prove \eqref{eqjk12kjhio} in the three cases below.
\begin{enumerate}

    \item When $\alpha_n \sim N_n$, we have $p_n\sim \frac{1}{N_n}$, and thus $p_n > \frac{c}{N_n}$ for some constant $c>0$ and $n$ large enough. Therefore
    \[
    \E(|I|) = N_n(1-p_n)^{N_n-1} \le N_n\left(1-\frac{c}{N_n}\right)^{N_n-1}\le N_n e^{-c}.
    \]
    Applying Chebyshev's inequality yields
    \[
      \P(|I|>N_n-2n) \leq \frac{\var(|I|)}{(N_n-2n-E[|I|])^2}\le\frac{2N_n^2p_n}{(N_n-2n-N_ne^{-c})^2} = \frac{2p_n}{(1-2n/N_n-e^{-c})^2}.
    \]
    Since $p_n\sim\frac{1}{N_n}$ and $N_n \sim n^2$, we  have
    \[
    \lim_{n\to\infty}\P(|I|>N_n-2n)=0.
    \]
    \item The next case is when $\alpha_n\ll N_n$, or equivalently when $p_n\ll\frac{1}{N_n}$. Using Taylor's expansion, we have
    \[
    \E(|I|)=N_n(1-p_n)^{N_n-1} \leq N_n\bigg(1-p_n(N_n-1)+p_n^2\frac{(N_n-1)(N_n-2)}{2}\bigg).
    \]
    Again, we apply Chebyshev's inequality:
\begin{align*}
\mathbb{P}(|I|\geq N_n-2n) &\leq \frac{\var(|I|)}{(N_n-2n-E[|I|])^2}\\
&\leq \frac{2N_n^2p_n}{\bigg(N_n-2n-N_n+N_n(N_n-1)p_n-\frac{N_n(N_n-1)(N_n-2)}{2}p_n^2\bigg)^2}\\
&=\frac{2N_n^2p_n}{\bigg(-2n+N_n(N_n-1)p_n-\frac{N_n(N_n-1)(N_n-2)}{2}p_n^2\bigg)^2}\\
\end{align*}
Now we plug in $p_n=\frac{2n+\alpha_n}{N_n(N_n-1)}$ and proceed:
\begin{align*}
\mathbb{P}(|I|\geq N_n-2n) &\leq  \frac{\frac{2N_n}{N_n-1}(2n+\alpha_n)}{\bigg(-2n+2n+\alpha_n-\frac{N_n-2}{2N_n(N_n-1)}(2n+\alpha_n)^2\bigg)^2}\\
&= \frac{2N_n}{N_n-1}\frac{2n+\alpha_n}{\bigg(\alpha_n-\frac{N_n-2}{2N_n(N_n-1)}(2n+\alpha_n)^2\bigg)^2}.
\end{align*}
If $\alpha_n\ll n$ or $\alpha_n\sim n$, we have 
\[
\frac{2n+\alpha_n}{\bigg(\alpha_n-\frac{N_n-2}{2N_n(N_n-1)}(2n+\alpha_n)^2\bigg)^2} \sim \frac{n}{\alpha_n^2} \to 0,
\]
as $n\to\infty$, since $\alpha_n \gg n^{1/2}$.

If $\alpha_n\gg n$, we have
\[
\frac{2n+\alpha_n}{\bigg(\alpha_n-\frac{N_n-2}{2N_n(N_n-1)}(2n+\alpha_n)^2\bigg)^2} \sim \frac{\alpha_n}{\alpha_n^2} = \frac{1}{\alpha_n} \to 0,
\]
as $n\to\infty$,  since $\alpha_n\ll N_n$. Thus, either way we must have
\[
    \lim_{n\to\infty}\P(|I|>N_n-2n)=0.
\]
  \item When $\alpha_n \gg N_n$, we can apply the same argument as Case 2, since 
  \[
\frac{2n+\alpha_n}{\bigg(\alpha_n-\frac{N_n-2}{2N_n(N_n-1)}(2n+\alpha_n)^2\bigg)^2} \ll \frac{\alpha_n}{\alpha_n^2} = \frac{1}{\alpha_n} \to 0.
\]

\end{enumerate}
In all cases above, we have $\lim_{n\to\infty}\mathbb{P}(|I|\geq N_n-2n)=0$.
\end{proof}

We are now ready to provide the first main theorem.
\begin{theorem}\label{cor:main1}
Let $G_n$ denote the Erd\H os-R\'enyi random graph with $N_n$ vertices and edge probability $p_n$, and let $R_n$ be the reaction network associated to $G_n$. When $p_n \gg r(n)=\frac1{n^3}$, we have
\[
\lim_{n\to\infty} \mathbb{P}(\delta_{R_n}=0)=0.
\]
\end{theorem}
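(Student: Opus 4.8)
The plan is to reduce the statement to Lemma \ref{upper} by way of the counting bound in Lemma \ref{lemma:defproperties}(e). That bound says that deficiency zero forces $|\mathcal{C}| \le 2n$; taking the contrapositive, if the reaction network $R_n$ has more than $2n$ (non-isolated) vertices, then its deficiency cannot be zero. Since $|\mathcal{C}|$ equals the number of vertices of positive degree in $G_n$, we have $|\mathcal{C}| = N_n - |I|$, where $I$ is the set of isolated vertices. Hence the event $\{\delta_{R_n} = 0\}$ is contained in the event $\{|\mathcal{C}| \le 2n\} = \{|I| \ge N_n - 2n\}$, so that
\[
\mathbb{P}(\delta_{R_n} = 0) \le \mathbb{P}(|I| \ge N_n - 2n).
\]
The whole problem is thereby reduced to showing that the right-hand side tends to $0$.

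First I would recast the hypothesis $p_n \gg r(n) = 1/n^3$ in the parametrization used by Lemma \ref{upper}. Define $\alpha_n := p_n N_n(N_n - 1) - 2n$, so that $p_n = \frac{2n + \alpha_n}{N_n(N_n-1)}$ holds automatically. The only thing to verify is that $\alpha_n \gg n^{1/2}$, which is precisely the hypothesis of Lemma \ref{upper}. Using $N_n = \frac{n^2+3n+2}{2}$, so that $N_n(N_n-1) \sim n^4/4$, I would compute
\[
\frac{2n + \alpha_n}{n} = \frac{p_n N_n(N_n-1)}{n} = \left(p_n n^3\right)\cdot\frac{N_n(N_n-1)}{n^4} \longrightarrow \infty,
\]
since $p_n n^3 \to \infty$ by assumption and $N_n(N_n-1)/n^4 \to 1/4$. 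Consequently $\alpha_n/n \to \infty$, so in particular $\alpha_n \gg n^{1/2}$, and $\alpha_n > 0$ for all large $n$.

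With the hypothesis of Lemma \ref{upper} verified, that lemma applies directly and gives $\lim_{n\to\infty}\mathbb{P}(|I| \ge N_n - 2n) = 0$. Combined with the displayed bound above, this yields $\lim_{n\to\infty}\mathbb{P}(\delta_{R_n} = 0) = 0$, as claimed.

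There is no serious obstacle here, since Lemma \ref{upper} has already done the analytic heavy lifting (the second-moment Chebyshev estimate on the number of isolated vertices) and the passage from deficiency zero to a vertex-count bound is immediate from Lemma \ref{lemma:defproperties}(e). The one point demanding care is the asymptotic bookkeeping that translates $p_n \gg 1/n^3$ into $\alpha_n \gg n^{1/2}$: one must confirm that the rate $r(n) = 1/n^3$ is exactly calibrated so that the expected number of edges, of order $p_n N_n(N_n-1)$, exceeds $2n$ by an amount that dwarfs $n^{1/2}$. This calibration is what makes $1/n^3$ the correct threshold for this half of the argument.
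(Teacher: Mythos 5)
Your proposal is correct and takes essentially the same route as the paper: reduce deficiency zero to the vertex-count bound of Lemma \ref{lemma:defproperties}(e), so that $\mathbb{P}(\delta_{R_n}=0)\le \mathbb{P}(|I|\ge N_n-2n)$, and then invoke Lemma \ref{upper}. Your explicit check that $\alpha_n := p_nN_n(N_n-1)-2n$ satisfies $\alpha_n \gg n$ (hence $\alpha_n \gg n^{1/2}$) simply spells out the step the paper dispatches with the terse observation that $r(n)\sim n/N_n^2$, so $p_n \gg r(n)$ places $p_n$ within the hypothesis of Lemma \ref{upper}.
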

\begin{proof}
Note that  the vertices of the reaction network $R_n$ correspond to the vertices in  $G_n$ with positive degree. Thus, letting $I$ denote the set of isolated vertices of $G_n$, Lemma \ref{lemma:defproperties}(e) implies that if the network is deficiency zero, we must have
\begin{equation}\label{eq12141231}
|I| \geq   N_n-2n .
\end{equation}
From \eqref{eq12141231}, we have
\begin{equation}\label{eq189472}
\mathbb{P}(\delta_{R_n} =0)\leq \mathbb{P}(|I|\geq N_n-2n).
\end{equation}
Since $ r(n)=\frac{1}{n^3}\sim \frac{n}{N_n^2}$ and $p_n\gg r(n)$, we have $p_n$ satisfies the condition in Lemma \ref{upper}. Hence, using Lemma \ref{upper} we  have 
\[
\lim_{n\to\infty} \mathbb{P}(\delta_{R_n}=0)=\lim_{n\to \infty} \mathbb{P}(|I| \ge N_n - 2n) = 0.\qedhere
\]

\end{proof}

\subsection{The case $p_n\ll r(n)$} \label{sec:lower}

We will show that in the case $p_n\ll r(n)$ it is enough to focus on paired reaction networks, which are introduced in Section \ref{sec2}. We first state the main theorem.

\begin{theorem}\label{thm:secondmain}
Let $G_n$ denote the Erd\H os-R\'enyi random graph with $N_n$ vertices and edge probability $p_n$, and let $R_n$ be the reaction network associated to $G_n$. When $p_n \ll r(n)=\frac1{n^3}$, we have
\[
\lim_{n\to\infty}\mathbb{P}(\delta_{R_n} =0)=1,
\]
\end{theorem}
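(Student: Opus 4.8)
The plan is to bound the failure probability $\P(\delta_{R_n}\neq 0)$ by separating the event $\{\delta_{R_n}\neq 0\}$ into two obstructions and controlling each by a first–moment (Markov) estimate. Writing $\mathcal{A}$ for the event that $G_n$ is a matching, equivalently that $R_n$ is \emph{paired} in the sense of Definition \ref{def:paired}, I would use
\[
\P(\delta_{R_n}\neq 0)\le \P(\mathcal{A}^c)+\P(\delta_{R_n}\neq 0,\ \mathcal{A}),
\]
and show both terms tend to $0$. On $\mathcal{A}$, Lemma \ref{lemma:defproperties}(f) reduces the question to linear independence of the reaction vectors, so the second term is exactly the probability that a paired network has linearly \emph{dependent} reaction vectors.

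First I would show $\P(\mathcal{A}^c)\to 0$. The graph $G_n$ fails to be a matching precisely when some vertex has degree at least $2$, i.e.\ when a ``cherry'' (a pair of edges sharing a vertex) is present. If $X$ counts cherries, then $\E[X]=N_n\binom{N_n-1}{2}p_n^2\sim \tfrac12 N_n^3 p_n^2$. Since $N_n\sim\tfrac12 n^2$ we have $N_n^3 p_n^2\sim\tfrac18(n^3p_n)^2$, and the hypothesis $p_n\ll r(n)=n^{-3}$ gives $n^3p_n\to0$, hence $\E[X]\to0$ and $\P(\mathcal{A}^c)=\P(X\ge1)\to0$ by Markov. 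It is worth noting that this is the binding estimate: $\E[X]=\Theta(1)$ exactly when $p_n\asymp n^{-3}$, which is what pins the threshold at $r(n)$.

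For the second term I would run a first–moment argument over \emph{circuits}, i.e.\ minimal linearly dependent sets of reaction vectors. If the reaction vectors are dependent then some circuit is present, so $\P(\delta_{R_n}\neq0,\ \mathcal{A})\le\sum_{k\ge2}D_k\,p_n^{\,k}$, where $D_k$ is the number of size-$k$ circuits among the potential reactions on $\C^0_n$. The dominant contribution is $k=2$ (two reactions with proportional reaction vectors); a direct count, driven by vectors of the form $e_j-e_i$, each realizable by $\Theta(n)$ reactions over $\Theta(n^2)$ choices of $(i,j)$, gives $D_2=\Theta(n^4)$ and hence $D_2 p_n^2=\Theta\big((n^2p_n)^2\big)\to0$, since $p_n\ll n^{-3}$ forces $n^2p_n\to0$. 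The key structural input for general $k$ is that in a circuit $\sum_i c_i\zeta_i=0$ with all $c_i\neq0$, every species in the support of any $\zeta_i$ must appear in the support of at least two of the reaction vectors; as each $\zeta_i$ is supported on at most $4$ species, a size-$k$ circuit involves at most $2k$ species. Fixing those $\le 2k$ species (at most $n^{2k}$ choices) and noting that only $O(k^4)$ reactions live on them yields a per-$k$ bound $D_k p_n^k\le C_k (n^2p_n)^k$, which tends to $0$ for each fixed $k$.

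The hard part is forcing the circuit sum $\sum_{k\ge2}D_k p_n^k\to0$ \emph{uniformly} rather than termwise: the constant $C_k$ grows super-exponentially (roughly $k^{3k}$, from choosing $k$ of the $O(k^4)$ reactions on a fixed species set), so the series is not obviously summable even though $n^2p_n\to0$. I would resolve this by truncation: for $k\le K_n$ with $K_n\to\infty$ slowly enough that the super-exponential constant is beaten by the $(n^2p_n)^k$ factor, the finite sum is controlled by the $k=2$ term and vanishes; for $k>K_n$ I would instead argue that no circuit is present with high probability by exploiting subcriticality of the species–overlap structure. Concretely, the expected ``species-degree'' of a present reaction is $\Theta(n^3p_n)=o(1)$, so reactions sharing species form a subcritical cluster structure in which a connected family of more than $K_n$ reactions—which any large circuit must contain, since minimality forces the circuit to be connected through shared species—occurs with vanishing probability via an exploration/branching estimate with geometrically decaying cluster sizes. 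Carrying out this tail bound cleanly, and in particular removing the factorial/tree-counting overcount that defeats the naive union bound, is the main technical obstacle; the pairing step and the $k=2$ circuit count are comparatively routine. Combining the two pieces gives $\P(\delta_{R_n}\neq0)\le\P(\mathcal{A}^c)+\sum_{k\ge2}D_kp_n^k\to0$, as desired.
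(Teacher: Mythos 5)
Your proposal is correct in outline but follows a genuinely different route from the paper. The paper never passes to the complement: it lower-bounds $\mathbb{P}(\delta_{R_n}=0)$ by $\mathbb{P}(\delta_{R_n}=0,\ R_n \text{ paired})$, expands this as a sum over $i$-paired configurations with $i\le k_n$ (where $k_n$ is chosen so that $N_n^2p_n\ll k_n\ll n$), computes the matching probabilities exactly, and controls the conditional probability of linear independence by a random-matrix estimate (Proposition \ref{prop:random_matrix}) whose union bound runs only over the $\binom{k_n}{\ell}$ subsets of the reactions \emph{actually present}; the proof closes with a Poisson-type summation and Stirling/Taylor bookkeeping. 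You instead bound the failure probability by two first moments: a cherry count for $\mathbb{P}(\mathcal{A}^c)$ (correct: the expectation is of order $(n^3p_n)^2\to0$), and a union bound over circuits for dependence on $\mathcal{A}$, using the same structural facts the paper uses (Lemma \ref{lemma:defproperties}(f), and the observation that a size-$k$ circuit is supported on at most $2k$ species). Your route is more transparent and dispenses with the exact matching enumeration; what the paper's conditioning buys is that the number of present reactions is capped at $k_n$ from the start, so the union bound over \emph{all potential} circuits of \emph{all} sizes---the step that troubles you---never arises. (One caveat: your aside that the cherry estimate ``pins the threshold'' is only a statement about your method; a degree-two vertex does not by itself force positive deficiency, e.g.\ $\emptyset\to S_1\to S_1+S_2$ has deficiency zero, and the $p_n\gg n^{-3}$ direction genuinely needs Lemma \ref{lemma:defproperties}(e), as in Theorem \ref{cor:main1}.)

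The one genuine gap is the uniform control of large circuits, and your instinct that it cannot be waved away is right: for $p_n$ just below threshold, say $p_n=n^{-3}/\log n$, the typical number of edges is of order $N_n^2p_n\sim n/\log n$, while the termwise bound $D_kp_n^k\le(Ck^3n^2p_n)^k$ is useful only for $k$ up to roughly $\big(n/(n^3p_n)\big)^{1/3}=(n\log n)^{1/3}$, so truncation alone cannot reach the typical number of reactions and a second argument for $k>K_n$ is mandatory. Your sketch of that argument is sound and is completable by standard tools, so I would classify this as an unfinished step rather than a wrong one. Take $K_n=\log^2n$. For $2\le k\le K_n$ the sum $\sum_k(CK_n^3n^2p_n)^k$ is geometric with vanishing ratio, hence vanishes. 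For $k>K_n$, a present circuit is indeed a species-connected family of present reactions (your minimality argument is correct), and the ``factorial overcount'' you worry about is eliminated by the standard lemma that in a graph of maximum degree $d$ the number of connected $m$-vertex subgraphs containing a fixed vertex is at most $(ed)^{m-1}$. In the graph on potential reactions (two reactions adjacent iff they share a species) the maximum degree is at most $Cn^3$, so the expected number of present species-connected families of size $m$ is at most $N_n^2p_n\,(Cen^3p_n)^{m-1}$; taking $m=K_n+1$ (pruning a larger circuit down to this size) this vanishes because $n^3p_n\to0$ and $N_n^2p_n\ll n\ll 2^{K_n}$. With that lemma supplied, your proof closes; as written, however, the large-circuit tail is asserted rather than proved, and this is exactly the technical work the paper's conditioning on $k_n$-paired networks was designed to avoid.
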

\begin{proof}
We have
\begin{align*}
\P(\delta_{R_n}=0)&=\P(\delta_{R_n}=0, R_n \text{ is paired})+\P(\delta_{R_n}=0, R_n \text{ is not paired})\\
&\geq \P(\delta_{R_n}=0, R_n \text{ is paired}).
\end{align*}
Therefore it suffices to show
\[
\lim_{n\to\infty}\P(\delta_{R_n}=0, R_n \text{ is paired})=1.
\]
Noting that for deficiency zero models, the number of reversible reaction vectors is bounded above by $n$, we have  
\begin{align}\label{eq12i3uljio}
\mathbb{P}(\delta_{R_n}=0, R_n \text{ is paired} )& = \sum_{i=1}^n\mathbb{P}(\delta_{R_n}=0, R_n \text{ is $i$-paired} ) \nonumber\\
&=\sum_{i=1}^n\mathbb{P}(\delta_{R_n}=0| R_n \text{ is $i$-paired} )\mathbb{P}(R_n \text{ is $i$-paired}) \nonumber\\
&=\sum_{i=1}^n \mathbb{P}(\delta_{R_n}=0| R_n \text{ is $i$-paired} )\frac{N_n!}{i!2^i(N_n-2i)!}p_n^i (1-p_n)^{N_n(N_n-1)/2-i}\nonumber\\
&\geq \sum_{i=1}^n \mathbb{P}(\delta_{R_n}=0| R_n \text{ is $i$-paired} )\frac{(N_n-2i)^{2i}}{i!2^i}p_n^i (1-p_n)^{N_n(N_n-1)/2-i} 
\end{align}
where the third equality uses that the number of $i$-paired graphs is ${N_n\choose 2}{N_n-2\choose 2}\dots {N_n-2i+2\choose 2}$, with the repetition of the graphs accounted for by division  by $i!$.

Note that because $p_n \ll 1/n^3$ and $N_n \sim n^2$ we have that $N_n^2p_n \ll n$.  Now let $k_n$ satisfy $\lim_{n\to\infty}k_n=\infty$ and  $N_n^2 p_n \ll k_n\ll n$. 
Cutting off the last $n-k_n$ terms from  \eqref{eq12i3uljio}, yields
\begin{align}
\mathbb{P}(\delta_{R_n}=0, R_n \text{ is paired} )&\ge \sum_{i=1}^{k_n} \mathbb{P}(\delta_{R_n}=0| R_n \text{ is $i$-paired} )\frac{(N_n-2i)^{2i}}{i!2^i}p_n^i (1-p_n)^{N_n(N_n-1)/2-i} \nonumber\\
&\geq\sum_{i=1}^{k_n}\bigg(1-c\frac{i^4}{n^4}\bigg)\bigg(1-\frac{21i}{n}\bigg)  \frac{(N_n-2i)^{2i}}{i!2^i}p_n^i (1-p_n)^{N_n(N_n-1)/2-i} \label{technical ineq}\\
&\geq \bigg(1-c\frac{k_n^4}{n^4}\bigg)\bigg(1-\frac{21k_n}{n}\bigg)(1-p_n)^{N_n^2/2} \sum_{i=1}^{k_n}\frac{(N_n-2i)^{2i}}{i!2^i}p_n^i \nonumber\\
&\geq \bigg(1-c\frac{k_n^4}{n^4}\bigg)\bigg(1-\frac{21k_n}{n}\bigg)(1-p_n)^{N_n^2/2} \sum_{i=1}^{k_n}\frac{(N_n-2k_n)^{2i}}{i!2^i}p_n^i.\nonumber
\end{align}
where the inequality in \eqref{technical ineq} will be proven using Lemma \ref{4species}, Proposition \ref{prop:random_matrix}, and Lemma \ref{pairs} after the proof of the main theorem.  The inequalities after \eqref{technical ineq} follow by noting that $i \le k_n.$

Let $\lambda_n = \frac{(N_n-2k_n)^2p_n}{2}$, and note that $\lambda_n\ll k_n$ since we chose $N_n^2p_n\ll k_n$. Using Taylor's remainder theorem and Stirling's approximation, we have
\[
\sum_{i=1}^{k_n}\frac{\lambda_n^i}{i!} \geq e^{\lambda_n} - \frac{e^{\lambda_n}\lambda_n^{k_n+1}}{(k_n+1)!}\geq e^{\lambda_n}\bigg(1-\frac{\lambda_n^{k_n+1}}{\sqrt{2\pi}(k_n+1)^{k_n+1}e^{-k_n+1}}\bigg)=e^{\lambda_n}\bigg(1-\frac{1}{\sqrt{2\pi}}\bigg(\frac{\lambda_n e}{k_n+1}\bigg)^{k_n+1}\bigg).
\]
Thus we have
\[
\mathbb{P}(\delta_{R_n}=0, R_n \text{ is paired} )\geq  \bigg(1-c\frac{k_n^4}{n^4}\bigg)\bigg(1-\frac{21k_n}{n}\bigg)(1-p_n)^{N_n^2/2} e^{\lambda_n}\bigg(1-\frac{1}{\sqrt{2\pi}}\bigg(\frac{\lambda_n e}{k_n+1}\bigg)^{k_n+1}\bigg).
\]
Since $\lambda_n\ll k_n\ll n$, the first, second, and last terms converge to one.  Hence, it suffices to show
\[
\lim_{n\to\infty}(1-p_n)^{N_n^2/2} e^{\lambda_n} =1,
\]
or
\[
\lim_{n\to\infty}\frac{N_n^2}{2}\ln(1-p_n) + \lambda_n  =0.
\]
Since $p_n\ll 1$, we have $-p_n-p_n^2\leq \ln(1-p_n) \leq -p_n$. Thus
\[
 \frac{N_n^2}{2}\ln(1-p_n) + \lambda_n \leq -\frac{N_n^2}{2}p_n +\lambda_n =\frac{p_n}{2}((N_n-2k_n)^2-N_n^2)=\frac{p_n}{2}(-4k_nN_n + 4k_n^2).
\]
On the other hand, and using the equality above,
\[
 \frac{N_n^2}{2}\ln(1-p_n) + \lambda_n \geq -\frac{N_n^2}{2}(p_n+p_n^2) +\lambda_n = \frac{p_n}{2}(-4k_nN_n + 4k_n^2) - \frac{N_n^2p_n^2}{2}.
\]
Since $k_n\ll n$, $N_n\sim n^2$ and $p_n\ll \frac{1}{n^3}$, we have
\[
\lim_{n\to\infty}\frac{p_n}{2}(-4k_nN_n + 4k_n^2) = 0, \quad \text{and,} \quad \lim_{n\to\infty}\frac{N_n^2p_n^2}{2} =0. 
\]
Thus
\[
\lim_{n\to\infty}\frac{N_n^2}{2}\ln(1-p_n) + \lambda_n  =0,
\]
which concludes the proof of the theorem.
\end{proof}

We complete this section by providing the required technical lemmas and proposition leading to the inequality in \eqref{technical ineq}.
Recall that we only consider binary reaction networks, thus each reaction can contain at most 4 species (2 species in each vertex). 
The next  lemma shows that for our analysis later, it suffices to only consider reaction networks for which each reaction vector has exactly four non-zero components.

Note that in the construction we are using, random graphs with the same number of edges have the same probability. We use this fact heavily in the proofs of the next two lemmas, where we condition on $R_n$ being $k_n$-paired and  can therefore generate $R_n$ uniformly from the set of all $k_n$-paired graphs.

\begin{lemma}\label{4species}
Suppose that $k_n\ll n$. Let $A_n$ be the event that each reaction vector in $R_n$ has   exactly 4 non-zero components. Then we have
\[
\P(A_n|R_n \text{ is } \text{$k_n$-paired})\geq 1-\frac{21k_n}{n}
\]
\end{lemma}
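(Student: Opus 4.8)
The plan is to exploit the conditional uniform structure highlighted in the remark preceding the lemma. Once we condition on $R_n$ being $k_n$-paired, every $k_n$-paired graph carries exactly $k_n$ edges and hence the same probability, so $R_n$ is distributed uniformly over the collection of all matchings of size $k_n$ on the vertex set $\C^0_n$; all probabilities below are understood to be conditional on this event. The first step is to record the marginal law of a single matched pair. For any fixed unordered pair of distinct vertices $\{u,v\}\subseteq \C^0_n$, a matching of size $k_n$ that contains $\{u,v\}$ is the same datum as a matching of size $k_n-1$ on the remaining $N_n-2$ vertices, so taking the ratio of the two enumeration formulas gives
\[
\P\big(\{u,v\} \text{ is an edge of } R_n\big) = \frac{k_n}{\binom{N_n}{2}},
\]
the same value for every pair by symmetry.

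Next I would characterize exactly which reactions realize $A_n$. Viewing complexes as vectors in $\Z^n_{\ge 0}$, a pair $\{y,y'\}$ produces a reaction vector $y'-y$ with exactly four nonzero components if and only if both $y$ and $y'$ are binary with two \emph{distinct} species (of the form $S_a+S_b$ with $a\ne b$, as opposed to $2S_a$, a unary complex, or $\vec 0$) and, writing $y=S_a+S_b$ and $y'=S_c+S_d$, one has $\{a,b\}\cap\{c,d\}=\emptyset$. Indeed, any use of a zeroth-order, unary, or doubled complex, or any shared species, forces either a cancellation or a $\pm 2$ entry, leaving at most three nonzero components. Call such pairs \emph{good} and all remaining pairs \emph{bad}. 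The enumeration is then elementary: choosing two species-disjoint distinct-species binary complexes gives $\binom n2\binom{n-2}2$ ordered choices, so the number of good pairs is $G=\tfrac12\binom n2\binom{n-2}2=\tfrac18 n(n-1)(n-2)(n-3)$, while the total number of pairs is $\binom{N_n}{2}=\tfrac12 N_n(N_n-1)=\tfrac18 n(n+1)(n+2)(n+3)$, using $N_n=\tfrac12(n+1)(n+2)$.

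To finish, let $X$ be the number of bad edges appearing in $R_n$. Since a $k_n$-paired network has exactly $k_n$ edges, $A_n^c=\{X\ge 1\}$, and by the single-pair marginal together with linearity of expectation,
\[
\E[X] = \big(\tbinom{N_n}{2}-G\big)\cdot\frac{k_n}{\binom{N_n}{2}} = k_n\left(1-\frac{G}{\binom{N_n}{2}}\right).
\]
The ratio simplifies cleanly, $\frac{G}{\binom{N_n}2}=\frac{(n-1)(n-2)(n-3)}{(n+1)(n+2)(n+3)}$, whence $1-\frac{G}{\binom{N_n}2}=\frac{12(n^2+1)}{(n+1)(n+2)(n+3)}$. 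Markov's inequality gives $\P(A_n^c)\le\E[X]$, so the lemma reduces to verifying $\frac{12(n^2+1)}{(n+1)(n+2)(n+3)}\le\frac{21}{n}$, i.e. $12n(n^2+1)\le 21(n+1)(n+2)(n+3)$, which holds for all $n\ge 1$ by a term-by-term comparison of the two cubics.

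The conceptual crux is the reduction to a uniform matching and the identity for the single-pair marginal; once those are in hand the rest is bookkeeping. The only place demanding care is the exact count of good pairs and the resulting algebraic simplification: both $G$ and $\binom{N_n}{2}$ are $\sim n^4/8$, and their difference is $\Theta(n^3)$, which is precisely what produces the $O(k_n/n)$ bound. Because the stated constant $21$ comfortably exceeds the true leading constant $12$, no delicate estimate is needed and the crude polynomial comparison above suffices.
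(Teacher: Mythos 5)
Your proof is correct, and it takes a genuinely different route from the paper's. The paper argues sequentially: writing $A_n=\cap_i A_n^i$, it bounds each conditional probability $\P(A_n^{j+1}\mid \cap_{i\le j}A_n^i, R_n \text{ is } k_n\text{-paired})$ from below by $1-\frac{21}{n}$ via a counting argument for the $(j+1)$st pair given the earlier choices, and then multiplies and applies Bernoulli's inequality. You instead compute the exact single-edge marginal of a uniform random matching of size $k_n$, namely $k_n/\binom{N_n}{2}$ (your derivation via the ratio of the two enumeration formulas is right, and matches the $i$-paired count $\frac{N_n!}{i!\,2^i(N_n-2i)!}$ used in the paper's proof of Theorem \ref{thm:secondmain}), characterize exactly which unordered pairs yield four non-zero components (two species-disjoint complexes of the form $S_a+S_b$, $a\ne b$ --- your case analysis is complete), and then run a first-moment argument: $\P(A_n^c)\le \E[X]=k_n\bigl(1-\tfrac{G}{\binom{N_n}{2}}\bigr)=k_n\cdot\tfrac{12(n^2+1)}{(n+1)(n+2)(n+3)}\le \tfrac{21k_n}{n}$. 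Your approach buys several things: the bound is exact at the level of expectations, it holds for all $n\ge 1$ rather than only ``for $n$ large enough'' as in the paper's chain-rule estimate, and it does not actually use the hypothesis $k_n\ll n$ (the bound is simply vacuous when $k_n\ge n/21$). The paper's sequential counting is less clean here, but it is not wasted effort in context: essentially the same ``pick the $(j+1)$st pair given $j$ already chosen'' estimate is reused in the proof of Proposition \ref{prop:random_matrix} to bound $\P(C_\ell)$, where a pure marginal computation would not suffice.
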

\begin{proof}
Let $R_n$ be a $k_n$-paired reaction network, where $k_n\ll n$. Denote the $k_n$ reaction vectors by $\{v_n^i\}_{i=1}^{k_n} \in \mathbb{Z}^{n}$. We denote by $A_n^i$ the event that the vector $v^i_n$ has $4$ non-zero elements, thus $A_n=\cap_{i=1}^{k_n}A_n^i$.  The proof will proceed by using that
\begin{align*}
\P(A_n|R_n \text{ is } \text{$k_n$-paired}) =\prod_{j=0}^{k_n-1}\P(A_n^{j+1}|\cap_{i=1}^{j}A_n^i,R_n \text{ is } \text{$k_n$-paired}),
\end{align*}
and showing the limit of the right-hand side, as $n\to \infty$, is 1.

First, note that the total number of vertices of the form $S_k+S_m$ where $k\neq m$ is ${n \choose 2}$. Suppose we have already picked $j$ pairs of reversible reactions where each pair has 4 species. Then the number of unpicked vertices of the form $S_k+S_m$ where $k\neq m$ is ${n\choose 2}-2j$. After picking one such $S_k+S_m$ for the $j+1^{st}$ pair, we need to pick another vertex. The number of available vertices of the form $S_p+S_q$, where $p,q,m,$ and $k$ are all different is at least ${n-2\choose 2}-2j$, where the minus 2 comes from the fact that we remove the species $S_k$ and $S_m$ from the possibilities, and the $2j$ is the number of vertices we have already chosen.  

Thus for $n$ large enough, we have
\begin{align*}
\P(A_n^{j+1}|&\cap_{i=1}^{j}A_n^i,R_n \text{ is } \text{$k_n$-paired}) \\
&\geq \frac{\frac{1}{2}({n\choose 2}- 2j)({n-2\choose 2}- 2j)}{{N_n-2j \choose 2}}\tag{by considering our choices as detailed above}\\
&\ge \frac{\frac{1}{2}({n\choose 2}- 2n)({n-2\choose 2}- 2n)}{{N_n \choose 2}}\tag{since $j \le n$}\\
&=\frac{(n^2-5n)(n^2-9n+6)}{(n^2+3n+2)(n^2+3n)}\ge\frac{(n^2-5n)(n^2-9n)}{(n^2+4n)(n^2+3n)}\\
&=\frac{n^2-14n+45}{n^2+7n+12}=1-\frac{21n-33}{n^2+7n+12}\\
&\ge 1-\frac{21}{n},
\end{align*}  
and where the $1/2$ in the first term  accounts for the symmetry between the selected vertices.

Therefore, for $n$ large enough, we have
\begin{align}\label{eq12i31125}
\P(A_n|R_n \text{ is } \text{$k_n$-paired}) =\prod_{j=0}^{k_n-1}\P(A_n^{j+1}|\cap_{i=1}^{j}A_n^i,R_n \text{ is } \text{$k_n$-paired})\ge\bigg(1-\frac{21}{n}\bigg)^{k_n}\geq 1-\frac{21k_n}{n}
\end{align}
where the last inequality is due to Bernoulli's inequality. 
\end{proof}

Lemma \ref{4species} showed that if $k_n\ll n$ and $R_n$ is $k_n$-paired, then with high probability each reaction vector will have precisely 4 non-zero components.   The following proposition, stated in terms of discrete random matrices, proves that with probability approaching one, as $n \to \infty$, this set of reaction vectors will be linearly independent.

For each $n \ge 4$, let $D_n \subset \R^n$ be a set of  vectors for which (i) each vector in $D_n$ has precisely four non-zero elements, and (ii) for each  choice of four distinct indices from $\{1,\dots,n\}$ there is precisely one vector in $D_n$ with those as its non-zero components (so the size of $D_n$ is $\binom{n}{4}$).  While  the specific values of the non-zero elements do not play a role in the subsequent proposition, we note that these values are $+1$ and $-1$ in the current paper.   

\begin{proposition}\label{prop:random_matrix}
Let $k_n\ll n$ and let $\Gamma_n \in \R^{n\times k_n}$   be a  matrix whose columns are  distinct vectors chosen uniformly from $D_n$.  Let $I_n$ be the event that all column vectors of $\Gamma_n$ are linearly independent.  Then there is a constant $c>0$ for which
\[
\P(I_n) \geq 1- c\frac{k_n^4}{n^4}.
\]
\end{proposition}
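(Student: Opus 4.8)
The plan is to bound $\P(I_n^c)$, the probability that the chosen columns are linearly \emph{dependent}, by a first moment computation over circuits, i.e.\ minimal linearly dependent subsets of columns. Since a set of vectors is dependent if and only if it contains a circuit, we have $\P(I_n^c) = \P(\text{at least one circuit is present}) \le \E[\,\#\{\text{circuits among the chosen columns}\}\,]$, and it suffices to bound this expectation by $c\,k_n^4/n^4$.

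The key structural observation is sign-independent, which is precisely what makes the specific $\pm 1$ values irrelevant. Suppose $\{w_1,\dots,w_m\}\subseteq D_n$ is a circuit, so $\sum_{j} c_j w_j = 0$ with all $c_j\neq 0$. Then every coordinate index must lie in the support of at least two of the $w_j$: if an index $t$ lay in the support of only $w_{j_0}$, the $t$-th coordinate equation would read $c_{j_0}(w_{j_0})_t = 0$, forcing $c_{j_0}=0$ since $(w_{j_0})_t\neq 0$, a contradiction. Writing $u$ for the number of indices in the union of the supports, a double count of the $4m$ incidences then gives $u\le 2m$. Moreover two distinct four-support vectors have distinct supports and so cannot be proportional, whence every circuit satisfies $m\ge 3$.

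I would then run the first moment size by size. For fixed $m$, the number of $m$-subsets of $D_n$ satisfying the degree-$\ge 2$ necessary condition is at most $\sum_{u\le 2m}\binom{n}{u}\binom{\binom{u}{4}}{m}$, since such a configuration uses some $u\le 2m$ indices and is specified by choosing those indices and then $m$ of the $\binom{u}{4}$ available four-sets. The probability that $m$ prescribed vectors all appear among the $k_n$ columns chosen uniformly without replacement is $\binom{M-m}{k_n-m}\big/\binom{M}{k_n}\le (C k_n/M)^m$, where $M:=\binom{n}{4}\sim n^4/24$. Multiplying and summing over $3\le m\le k_n$, the powers of $n$ combine as $n^{\,u-4m}\le n^{-2m}$, and after Stirling estimates on $\binom{u}{4}$ and on the factorials the $m$-th term is bounded by $(C'\,m\,k_n/n^2)^m$. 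Because $m\le k_n\ll n$, the base satisfies $C'\,m\,k_n/n^2\le C'\,k_n^2/n^2\to 0$ uniformly in $m$, so the series is geometric-like and dominated by its $m=3$ term. This yields $\P(I_n^c) = O(k_n^3/n^6)$, and since $k_n^3/n^6\le k_n^4/n^4$ for all large $n$ (equivalently $k_n\ge 1/n^2$), the stated bound $\P(I_n)\ge 1-c\,k_n^4/n^4$ follows; in fact the argument gives something slightly stronger.

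The main obstacle is the uniform control of the combinatorial multiplicities across all sizes $m$ up to $k_n$. The crude bound on $\binom{\binom{u}{4}}{m}$ carries a factor of order $m^{4m}$, and one must verify that it is beaten by the factorials $(2m)!\,m!$ arising from unordered selection together with the $n^{-2m}$ gain from $u\le 2m$, so that the terms decay geometrically and the tail in $m$ is negligible. Once this bookkeeping is settled, the dominant contribution is transparently the $m=3$, $u=6$ configuration, namely three four-sets that pairwise share exactly two indices, which is exactly what produces the leading order $k_n^3/n^6 = o(k_n^4/n^4)$.
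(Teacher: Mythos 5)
Your proposal is correct, and its structural heart is the same as the paper's: both bound $\P(I_n^c)$ by a first-moment/union bound over minimally dependent subsets (your circuits), and both rest on the identical key observation that minimal dependence forces every index in the union of the supports to be covered by at least two vectors, hence at most $2m$ indices are involved. The execution differs in two ways worth noting. First, the paper factors the first moment as (number of column subsets of size $\ell$) $\times$ (probability that those columns all have supports inside some fixed set of $2\ell$ indices), bounding the latter by a sequential conditional-probability argument; you instead enumerate candidate circuit configurations inside $D_n$ via $\sum_{u\le 2m}\binom{n}{u}\binom{\binom{u}{4}}{m}$ and multiply by the selection probability $\binom{M-m}{k_n-m}/\binom{M}{k_n}\le (Ck_n/M)^m$. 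These are the same expectation organized two ways, and the bookkeeping you flag as the main obstacle does close: $(2m)^{4m}/\bigl((2m)!\,m!\bigr)\le (4e^3 m)^m$ by Stirling, so the $m$-th term is indeed at most $(C'mk_n/n^2)^m$, and since $mk_n/n^2\le k_n^2/n^2\to 0$ the series is geometric and dominated by its first term. Second, you make the genuinely new observation that distinct vectors in $D_n$ have distinct supports and hence cannot be proportional, so every circuit has size $m\ge 3$; the paper's sum starts at $\ell=2$, and its rate $ck_n^4/n^4$ is precisely that $\ell=2$ term, whereas your sum starts at $m=3$ and yields the strictly stronger bound $O(k_n^3/n^6)$, which implies the stated inequality since $k_n\ge 1$. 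So your route buys a sharper estimate at no extra cost, while the paper's sequential-conditioning bound is the piece it can reuse verbatim in Lemma \ref{4species}.
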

\begin{proof}
We denote the $k_n$ column vectors of $\Gamma_n$ by $\{v_n^i\}_{i=1}^{k_n} \in \mathbb{R}^{n}$. We say a set of vectors is \textit{minimally dependent} if any of its proper subsets are linearly independent. For any set of indices of vectors $T\subseteq \{1,2,\dots,k_n\}$ we denote $V_n^T =\{v_n^i: i\in T\}$. By noting that
\[
I_n^c = \bigcup_{\ell = 2}^{k_n} \{\exists \text{ a minimally dependent set of size $\ell$}\},
\]
we have
\begin{align}\label{eq12938913}
\P(I_n^c) &\leq \sum_{\ell=2}^{k_n}\sum_{|T|=\ell}\P(V_n^T \text{ is minimally dependent})=\sum_{\ell=2}^{k_n}{k_n \choose l}\P(B_\ell)
\end{align}
where $B_\ell$  is the event that $V_n^T$ is minimally dependent for a particular set $T$  satisfying  $|T|=\ell$.

Now fix a set $T$ with $|T|=\ell$. Without loss of generality, let $T=\{1,2,\dots,\ell\}$. Consider a matrix $M_\ell$ whose columns are the vectors in $V_n^T$. Note that the set $V_n^T$ being minimally dependent implies that $M_\ell$ has no row with only one non-zero entry (for otherwise, the set of vectors without the column associated to that element would be linearly dependent).  This implies further that each non-zero row of $M_\ell$ has at least $2$ entries.   Since each column of $M_\ell$ has exactly $4$ non-zero entries, $M_\ell$ has exactly $4\ell$ non-zero entries. Therefore, the number of non-zero rows in $M_\ell$ must be at most $2\ell$ and the number of zero rows in $M_\ell$ must be at least $n-2\ell$. Combining all of the arguments above, we must have
\begin{align}\label{eq123jkl123}
\P(B_\ell)\leq \P(M_\ell \text{ has at least } n-2\ell \text{ zero rows}).
\end{align}

We denote the row vectors of $M_\ell$ by $\{w_n^i\}_{i=1}^{n}$. For a subset of indices of species  $R\subseteq \{1,2,\dots,n\}$ we denote $W_n^R=\{w_n^i: i\in R\}$.   We say that $W_n^R=0$ if all the vectors in the set are the zero vector. We have
\begin{align}
\begin{split}\label{eq989898}
\P(M_\ell \text{ has at least } n-2\ell \text{ zero rows}&)
\leq \sum_{|R|=n-2\ell}\P(W_n^R=0) ={n\choose n-2\ell}\P(C_\ell)
\end{split}
\end{align}
where $C_\ell$ is the event that $W_n^R=0$ for a particular $R$ satisfying $|R|=n-2\ell$. 

Now fix a set $R$ with $|R|=n-2\ell$. Without loss of generality, let $R=\{2\ell+1,\dots,n\}$. Then the event $C_\ell$ involves picking $\ell$ column vectors: $V_n^T=\{v_n^1,\dots,v_n^\ell\}$ where the last $n-2\ell$ elements of each column vector are zero. Recall that each column vector has exactly 4 non-zero elements. Suppose we have already picked $j$ such column vectors. The number of ways we can pick the $j+1$-st vector is at least $({n\choose 2}-2j)({n-2\choose 2} -2j)$ (this follows from the same argument as in the proof of Lemma \ref{4species}).  Among these, the number of ways we can pick the $j+1$-st vector whose  last $n-2\ell$  elements are zero is less than ${2\ell \choose 2} {2\ell-2 \choose 2}$.  Thus we have
\begin{align*}
 \P(C_\ell) &\leq \prod_{j=0}^{\ell-1} \frac{{2\ell \choose 2} {2\ell-2 \choose 2}}{({n\choose 2}-2j)({n-2\choose 2} -2j)}
 \leq  \prod_{j=0}^{\ell-1} \frac{{2\ell \choose 2} {2\ell-2 \choose 2}}{\frac{1}{4}{n\choose 2}{n-2\choose 2}} \leq 4 \bigg(\frac{2\ell}{n}\bigg)^4,
\end{align*}
where the 2nd inequality is due to the fact that $j\ll n$.
Plugging the above into \eqref{eq989898}, we see
\begin{align}
\begin{split}\label{eq123jo1}
\P(M_\ell \text{ has at least } n-2\ell \text{ zero rows}&) 
\leq {n\choose n-2\ell}4\bigg(\frac{2\ell}{n}\bigg)^{4\ell} \leq \frac{n^{2\ell}}{(2\ell)!} 4\bigg(\frac{2\ell}{n}\bigg)^{4\ell} \\
&\leq \frac{4n^{2\ell}}{\sqrt{2\pi}(2\ell/e)^{2\ell}}\bigg(\frac{2\ell}{n}\bigg)^{4\ell} = \frac{4}{\sqrt{2\pi}}\bigg(\frac{2\ell e}{n}\bigg)^{2\ell}.
\end{split}
\end{align}

Now combining \eqref{eq12938913}, \eqref{eq123jkl123}, and \eqref{eq123jo1}, we have
\begin{align}
\begin{split}\label{eqkjb123kj}
\P(I_n^c) &\leq \sum_{\ell=2}^{k_n} {k_n\choose \ell}\frac{4}{\sqrt{2\pi}}\bigg(\frac{2\ell e}{n}\bigg)^{2\ell}\leq\sum_{\ell=2}^{k_n}  \frac{k_n^\ell}{\ell!}\frac{4}{\sqrt{2\pi}}\bigg(\frac{2\ell e}{n}\bigg)^{2\ell}\\
&\leq\sum_{\ell=2}^{k_n}  \frac{k_n^\ell}{\sqrt{2\pi}(\ell/e)^\ell}\frac{4}{\sqrt{2\pi}}\bigg(\frac{2\ell e}{n}\bigg)^{2\ell} =\sum_{\ell=2}^{k_n}\frac{2}{\pi}\bigg(\frac{4\ell e^3 k_n}{n^2}\bigg)^\ell\\
&\leq\sum_{\ell=2}^{\infty}\frac{2}{\pi}\bigg(\frac{4 e^3 k_n^2}{n^2}\bigg)^\ell \leq c \frac{k_n^4}{n^4}.
\end{split}
\end{align}
for some constant $c>0$, since $k_n\ll n$.  Thus we have
\[
\P(I_n) \geq 1 - c\frac{k_n^4}{n^4}.\qedhere
\]
\end{proof}

We return to the setting of reaction networks with our final key lemma.

\begin{lemma}\label{pairs}
Suppose that $k_n\ll n$. Then we have
\begin{equation*}
\P(\delta_{R_n}=0|R_n \text{ is } \text{$k_n$-paired}) \geq \bigg(1-c\frac{k_n^4}{n^4}\bigg)\bigg(1-\frac{21k_n}{n}\bigg).
\end{equation*}
\end{lemma}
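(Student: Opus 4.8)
The plan is to exploit the characterization of deficiency zero for paired networks given in Lemma \ref{lemma:defproperties}(f): conditioned on $R_n$ being $k_n$-paired, the network satisfies $\delta_{R_n}=0$ if and only if its $k_n$ reaction vectors $\zeta_1,\dots,\zeta_{k_n}$ are linearly independent. Thus the quantity to bound from below is $\P(\zeta_1,\dots,\zeta_{k_n}\text{ linearly independent}\mid R_n\text{ is }k_n\text{-paired})$, and I expect the two factors in the claimed inequality to arise from the two distinct ways independence can fail: a reaction vector having fewer than four non-zero components, and a genuine linear dependence among four-component vectors.

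First I would introduce the event $A_n$ of Lemma \ref{4species}, that every reaction vector has exactly four non-zero components, and split
\[
\P(\delta_{R_n}=0\mid R_n\text{ is }k_n\text{-paired}) \ge \P(\delta_{R_n}=0\mid A_n,\,R_n\text{ is }k_n\text{-paired})\,\P(A_n\mid R_n\text{ is }k_n\text{-paired}).
\]
Lemma \ref{4species} bounds the second factor below by $1-\tfrac{21k_n}{n}$. For the first factor, on the event $A_n$ each $\zeta_j$ has precisely four non-zero entries (equal to $\pm 1$), so the matrix with columns $\zeta_1,\dots,\zeta_{k_n}$ is exactly of the type analyzed in Proposition \ref{prop:random_matrix}; applying that proposition gives the lower bound $1-c\tfrac{k_n^4}{n^4}$. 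Multiplying the two estimates yields the statement.

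The step requiring the most care — and the main obstacle — is identifying the conditional law of $(\zeta_1,\dots,\zeta_{k_n})$, given $A_n$ and the $k_n$-paired event, with the random-matrix model of Proposition \ref{prop:random_matrix}. Because graphs with the same number of edges are equiprobable, conditioning on $R_n$ being $k_n$-paired lets us generate the pairs uniformly; conditioning further on $A_n$ restricts each pair to a reaction $S_a+S_b\rightleftarrows S_c+S_d$ on four distinct species, whose reaction vector has support $\{a,b,c,d\}$. I would check that distinct components yield distinct reaction vectors (a vector with $\pm1$ entries summing to zero determines its source and product binary complexes uniquely, so vertex-disjoint pairs cannot collide) and that the induced law on supports matches the uniform choice of four-element index sets underlying Proposition \ref{prop:random_matrix}. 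Crucially, that proposition's estimate is driven entirely by the support pattern rather than the signs — its proof bounds dependence through the count of non-zero rows of a minimally dependent submatrix — so the specific $\pm1$ sign structure of reaction vectors is immaterial and the bound transfers directly. This matching of distributions, not any fresh computation, is the delicate point; once it is in place the two cited results combine immediately to give the inequality in \eqref{technical ineq}.
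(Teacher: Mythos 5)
Your proposal matches the paper's proof essentially step for step: the paper likewise invokes Lemma \ref{lemma:defproperties}(f) to reduce deficiency zero to linear independence of the $k_n$ reaction vectors, conditions on the event $A_n$ via the same inequality $\P(I_n\mid k_n\text{-paired}) \ge \P(I_n\mid A_n,\,k_n\text{-paired})\,\P(A_n\mid k_n\text{-paired})$, and then cites Lemma \ref{4species} and Proposition \ref{prop:random_matrix} for the two factors. The distributional identification you flag as the delicate point is exactly the step the paper treats implicitly (via the remark that conditionally on being $k_n$-paired the graph is uniform over $k_n$-paired configurations), so your write-up is, if anything, more careful on that point but not a different argument.
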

\begin{proof}
Let $R_n$ be a $k_n$-paired reaction network, where $k_n\ll n$. From Lemma \ref{lemma:defproperties}, $R_n$ has deficiency zero if and only if all $k_n$ reaction vectors are linearly independent. Let $I_n$ be the event that all $k_n$ reaction vectors are linearly independent.


Similar to Lemma \ref{4species}, denote by $A_n$ the event that all reactions have exactly 4 species. We have
\begin{align}\label{eqj13k2n}
 \P(\delta_{R_n}=0|R_n \text{ is } \text{$k_n$-paired})& = \P(I_n|R_n \text{ is } \text{$k_n$-paired})\nonumber \\ & \geq \P(I_n|A_n,R_n \text{ is } \text{$k_n$-paired})\P(A_n|R_n \text{ is } \text{$k_n$-paired}).
\end{align}
Utilizing Lemma \ref{4species} and Proposition \ref{prop:random_matrix}, we complete the proof of  Lemma \ref{pairs}.
\end{proof}

\section{Discussion}\label{sec5}

This work stemmed from a natural question pertaining to reaction networks: given the importance of deficiency zero in the reaction network literature, can we quantify how prevalent the condition is?   In the  Erd\H os-R\'enyi framework we have chosen here, we have provided a threshold function, $r(n) = \frac{1}{n^{3}}$, for the property in that if $p_n/r(n) \to 0$, then the probability of deficiency zero converges to 1, and if $p_n/r(n) \to \infty$, then the probability of deficiency zero converges to 0.  

We do not make the claim that the framework selected here is the only, or even the  most, biologically relevant.  Instead, having equal probabilities for each edge puts  as few assumptions on our model as possible, thereby making it a reasonable starting point for analysis.  
In fact, there are multiple avenues for future research, and we list just a few here.
\begin{itemize}
    \item One may want to study models in which some added structure is known.  For example, our assumption of equal probabilities would need to be relaxed in those contexts where different reaction types are more likely to appear in the network than others (such as when in-flows and out-flows of species are common).  This would necessitate the use of a stochastic block model framework.  We have carried out such an analysis in \cite{anderson2020deficiency}.
    
    \item In the setting of molecular biology some proteins may be more active and interact with many other proteins while some proteins may be relatively inactive and have fewer interactions. In such cases, we can study  random reaction networks under a more general random graph framework such as the Chung-Lu model, where vertices  can be assigned different weights \cite{ChungLu}. 
    
    \item Situations can arise in which some species are chemostated,  which keeps their concentrations constant. In such a case we may want to focus on the asymptotic behavior of  ``sub-networks", which consist of the species not being chemostated, instead of the whole network.    The study of sub-networks may also be useful in the multi-scale settings, where we want to focus on a subset of ``discrete'' species which are in low abundances and behave differently than those in high abundance.   \cite{ACK:ACR}.
    
    
    \item There are other meaningful topological features beside deficiency zero that we could study with our approach. Some features of interest are deficiency one (together with additional graphical features) as in \cite{ACR}, endotactic, strongly endotactic, and asyphonic as in \cite{ADE:deviation,ADE:geometric,GMS:geometric}.

\end{itemize}

 The analysis and methods developed here will, to varying degrees, be applicable to each of the situations listed above.
 


\section*{Acknowledgements}

We thank Robin Pemantle for influencing the formulation of the question posed in this paper.  We also thank an anonymous reviewer whose comments greatly influenced the final version of this paper. We gratefully acknowledge grant support from the Army Research Office via grant W911NF-18-1-0324.

\bibliographystyle{plain}
	\bibliography{bib}
	
\end{document}